\newtheorem{thm}{Theorem}[section]
\newtheorem{cor}[thm]{Corollary}
\newtheorem{lem}[thm]{Lemma}
\newtheorem{prop}[thm]{Proposition}
\theoremstyle{definition}
\newtheorem{defn}[thm]{Definition}
\newtheorem{nt}[thm]{Notation}
\theoremstyle{remark}
\newtheorem{rem}[thm]{Remark}
\newcommand{\st }{{\rm st }}
\newcommand{\lk }{{\rm lk }}
\renewcommand{\ge }{\geqslant}
\begin{document}

\title[]{When is a TRAAG orderable?}

\address{Fac. Matem\'{a}ticas, Universidad Complutense de Madrid and  
Instituto de Ciencias Matem\'aticas, CSIC-UAM-UC3M-UCM,
Madrid, Spain}
\author{Yago Antol\'{i}n}
\email[Yago Antol\'{i}n]{yago.anpi@gmail.com}

\address{
Department of Mathematical Sciences, University of Copenhagen, Copenhagen, Denmark}
\author{Mart\'{i}n Blufstein}
\email[Mart\'{i}n Blufstein]{mabc@math.ku.dk}

\address{IMB, UMR 5584, CNRS, Universit\'e de Bourgogne, 21000 Dijon, France}
\author{Luis Paris}
\email[Luis Paris]{lparis@u-bourgogne.fr}

%
%
\begin{abstract}
We characterize, in terms of the defining graph, when a twisted right-angled Artin group (a group whose only relations among pairs of generators are either commuting or Klein-bottle type relations) is left-orderable. 
\end{abstract}
%
%

\maketitle



\section{Introduction}
Right angled Artin groups (RAAGs for short) form a family of groups defined through their presentation.
The presentation is codified by a simplicial graph,
where the vertex set corresponds to the generating set and the edge set corresponds to the relations. 
In this codification each edge represents a commuting relation among the vertices joined by an edge.
The concept of twisted Artin groups appears in \cite{ClancyEllis} and a normal form for twisted right angled Artin groups (TRAAGs for short) has been obtained by I. Foniqi \cite{Foniqi-thesis, Foniqi-paper}.
Again, TRAAGs form a family of groups defined through their presentation in which some of the relations among pairs of generators are Klein bottle relations of the form $aba^{-1} = b^{-1}$. 
As the generators have non-symmetric roles, the codification of the presentation is through a  mixed graph, which  is a simplicial graph in which some edges are oriented.

\begin{defn}
A {\it mixed graph} $\Gamma$ is a quintuple $(V,E,D,t,o)$ where $V$ is a non-empty set whose elements are called {\it vertices}; $E$ is a set of subsets of two elements of $V$ (i.e $E\subseteq\{\{x,y\}: x,y\in V, x\neq y\}$) and elements of $E$ are called {\it edges}; and $D$ is a subset of $E$ whose elements are called {\it oriented edges}. Finally, $o, t$ are functions $o,t\colon D\to V$, such that for all $e\in D$, $e=\{o(e),t(e)\}$.
\end{defn}

\begin{nt}
Given a mixed graph $\Gamma$ and $e\in D$ we will use  $[o(e),\tau(e)\rangle$ to denote $e$. If $e=\{a,b\}\in E-D$, we will use $[a,b]$ to denote  $e$.

It will be useful to make the following abuse of notation and also use  $[a,b]$ to denote the word $aba^{-1}b^{-1}$ and $[a,b\rangle$ to denote  the word $abab^{-1}$, so for $e\in E$ we might see it as a word on $V\cup V^{-1}$ or as an edge.
\end{nt} 

\begin{defn}
Let $\Gamma= (V,E,D,t,o)$ be a mixed graph. The {\it twisted right angled Artin group based on $\Gamma$} is the group presented by 
$$G_\Gamma = \langle V \mid  E\rangle.$$
Here we are making the abuse of notation mentioned above identifying each $e\in E$ with a word.
\end{defn}
\begin{rem}
In the above definition, if $D$ is empty then $G_\Gamma$ is a {\it right angled Artin group based on $\Gamma$}.
\end{rem}

Recall that a group $G$ is left-orderable (resp. bi-orderable) if there exists a total order on $G$ that is invariant under the left-multiplication (resp. left- and right-multiplication) action on $G$. 
Recall that
$$
 \; \text{bi-orderable} \; \Rightarrow 
 \; \text{left-orderable} \;\Rightarrow 
 \; \text{torsion-free}.
$$
Our main result characterizes when a TRAAG is torsion-free or left-/bi- orderable in terms of graph properties of a mixed graph, showing that TRAAGs might enjoy or fail to have any of these properties.

Before stating the theorem, we set some terminology. By an {\it oriented cycle} on a mixed graph $\Gamma = (V,E,D,o,t)$ we mean a sequence $e_0,\dots, e_n\in D$ such that $t(e_i)=o(e_{i+1})$ for $i<n$ and $t(e_n)=o(e_0)$. The oriented cycle  $e_0,\dots, e_n\in D$ is supported on a complete subgraph if for all $x,y\in \cup_{i=0}^n\{o(e_i),t(e_i)\}$ either $x=y$ or $\{x,y\}\in E$.

\begin{thm}\label{thm: main}
Let $\Gamma$ be a mixed graph and $G_\Gamma$ be the associated TRAAG.
The following holds
\begin{enumerate}
\item $G_\Gamma$ is torsion-free if and only if it contains no oriented cycle supported on a complete subgraph.
\item $G_\Gamma$ is  left-orderable if and only if it contains no oriented cycle.
\item $G_\Gamma$ is bi-orderable if and only if it contains no oriented edge.
\end{enumerate}
\end{thm}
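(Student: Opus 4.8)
The plan is to split each of the three equivalences into its obstruction half and its constructive half, and to reduce at the outset to $|V|<\infty$: torsion-freeness, left-orderability and bi-orderability are all local properties, every finitely generated subgroup of $G_\Gamma$ lies in a parabolic subgroup $G_{\Gamma_0}$ with $\Gamma_0$ a finite full subgraph, parabolic subgroups embed (a consequence of Foniqi's normal form, or provable by a parallel induction), and $G_\Gamma=\varinjlim G_{\Gamma_0}$; on the other side an oriented edge, an oriented cycle, or an oriented cycle on a complete subgraph always lives in a finite subgraph. Statement (3) is then quick: if $D=\varnothing$ then $G_\Gamma$ is a RAAG, hence bi-orderable (being residually torsion-free nilpotent); conversely an oriented edge $[a,b\rangle$ gives $bab^{-1}=a^{-1}$, so $a$ is conjugate to $a^{-1}$, and since $a$ has nonzero image in $H_1(G_\Gamma)\cong\mathbb Z^{(V\setminus V_0)}\oplus(\mathbb Z/2)^{(V_0)}$ (with $V_0$ the set of origins of oriented edges) it is $\neq1$, while in a bi-ordered group no nontrivial element is conjugate to its inverse.

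For the constructive halves of (1) and (2) I induct on $|V|$ using the amalgam decomposition
\[
G_\Gamma \;=\; G_{\st v}\;*_{G_{\lk v}}\;G_{\Gamma-v},
\]
valid whenever $\st v\neq V$ because $v$ is joined by an edge only to vertices of $\st v$; by the inductive hypothesis parabolics embed, so this is a genuine amalgamated product. When $\Gamma$ is complete, the hypothesis of (1) or of (2) forces the oriented edges to form a DAG, so there is a source $v$; killing $v$ is then a retraction, every generator conjugates $v$ to $v^{\pm1}$, so $\langle\langle v\rangle\rangle=\langle v\rangle\cong\mathbb Z$ and $G_\Gamma=\mathbb Z\rtimes G_{\Gamma-v}$. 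For (1): torsion-freeness is inherited by amalgamated products over arbitrary subgroups and by extensions, so peeling any $v$ with $\st v\neq V$ (or a source when $\Gamma$ is complete) closes the induction. For (2): take $v$ to be a source (one exists, as there is no oriented cycle). If $\st v=V$, use the extension above ($\mathbb Z$-by-left-orderable is left-orderable). If $\st v\neq V$, then $G_{\st v}=\langle v\rangle\rtimes G_{\lk v}=\mathbb Z\rtimes G_{\lk v}$, so a left order on $G_{\Gamma-v}$ (which exists by induction) restricts to $G_{\lk v}$ and then extends to $G_{\st v}$ agreeing with it on $G_{\lk v}$; the Bludov--Glass criterion for amalgamated products then makes $G_\Gamma$ left-orderable.

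For the obstruction half of (1), suppose $\Gamma$ has an oriented cycle on a complete subgraph and pick one, $C=(v_0\to v_1\to\cdots\to v_n\to v_0)$, of minimal length. Minimality forbids oriented chords (an oriented chord would shortcut to a shorter such cycle), so every chord $\{v_i,v_j\}$ is present and unoriented. Using the Klein relations $v_{i+1}v_iv_{i+1}^{-1}=v_i^{-1}$ on consecutive pairs and the commuting relations on the chords, a direct telescoping computation gives $(v_0v_1\cdots v_n)^2=1$ (already visible for the triangle and for the completed $4$-cycle). Since the $v_i$ are distinct origins, $v_0\cdots v_n$ has nonzero image in $H_1(G_\Gamma)$, so it is a nontrivial element of order $2$ and $G_\Gamma$ is not torsion-free.

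The remaining implication, the obstruction half of (2), is the hard one: an oriented cycle must destroy left-orderability even when it is not supported on a complete subgraph, so $G_\Gamma$ can be torsion-free. Fix a minimal oriented cycle $C=(v_0\to\cdots\to v_n\to v_0)$ in $\Gamma$; minimality forbids oriented chords. Passing to the parabolic $G_{\Gamma_C}\hookrightarrow G_\Gamma$ on the vertices of $C$, it suffices to show $G_{\Gamma_C}$ is not left-orderable; if $\Gamma_C$ is complete this is the torsion case just treated, so the genuinely new situation is that some chord is absent and $G_{\Gamma_C}$ is torsion-free. The feature to exploit is that an oriented cycle has \emph{no source}, so the inductive peeling of the second paragraph cannot be carried out acyclically: the star decomposition closes into a loop, and iterating it presents $G_{\Gamma_C}$ as the fundamental group of a graph of groups over a circuit, with Klein-bottle (and $\mathbb Z$) vertex groups and $\mathbb Z$ or free edge groups. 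I expect the main obstacle to be showing that no left order survives one full turn around this circuit: the relations $v_{i+1}v_iv_{i+1}^{-1}=v_i^{-1}$ force the sign of a distinguished element to reverse after going once around --- the very mechanism that degenerates to genuine $2$-torsion in the complete case --- and in the torsion-free cases this must be ruled out by hand, excluding \emph{every} left order and not merely the obvious obstructions (torsion, generalized torsion, or mere failure of local indicability). The bare $4$-cycle group $\langle a,b,c,d\mid bab^{-1}=a^{-1},\ cbc^{-1}=b^{-1},\ dcd^{-1}=c^{-1},\ ada^{-1}=d^{-1}\rangle$ is the first and decisive test case.
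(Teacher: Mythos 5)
Your proposal does not actually prove the part of the theorem that constitutes the paper's main new content, namely the obstruction half of (2): that an oriented cycle (not supported on a complete subgraph, so with no torsion available) destroys left-orderability. You explicitly leave this step open, ending with a graph-of-groups strategy, an expectation about ``the sign of a distinguished element,'' and a ``decisive test case'' rather than an argument. The missing idea is elementary and entirely local: for the Klein bottle group $K=\langle a,b\mid aba^{-1}=b^{-1}\rangle$ one shows, using the exact sequence $1\to\langle b\rangle\to K\to\langle a\rangle\to 1$ and the fact that $K$ has exactly four left orders, that in \emph{every} left order $\prec$ on $K$ the subgroup $\langle b\rangle$ is dominated by $a$, i.e.\ $\langle b\rangle\prec\max_\prec\{a,a^{-1}\}$ (Lemma \ref{lem: convex}). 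Each consecutive pair $\langle x_i,x_{i+1}\rangle$ along an oriented cycle is a parabolic copy of $K$ by the normal form, so any left order on $G_\Gamma$ restricts to each of them, and chaining the domination once around the cycle yields $\max_\prec\{x_0,x_0^{-1}\}\prec\max_\prec\{x_0,x_0^{-1}\}$, a contradiction. No minimality of the cycle, no chord analysis, and no circuit of groups is needed; the whole obstruction lives in the rank-two parabolics.

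There is also a gap in your constructive half of (2). When $\st(v)\neq V$ you invoke Bludov--Glass for $G_{\st(v)}*_{G_{\lk(v)}}G_{V-\{v\}}$ with left orders that merely \emph{agree} on $G_{\lk(v)}$; agreement alone is not sufficient (there are torsion-free, non-left-orderable amalgams of left-orderable groups), and the usable sufficient condition requires $G_{\lk(v)}$ to be relatively convex in both factors. It fails to be relatively convex in $G_{\st(v)}=\langle v\rangle\rtimes G_{\lk(v)}$ whenever some neighbour $u$ inverts $v$ (i.e.\ $[v,u\rangle\in D$), since a $G_{\st(v)}$-invariant total order on the coset space $G_{\st(v)}/G_{\lk(v)}\cong\mathbb Z$ would have to be invariant both under translation by $v$ and under $n\mapsto -n$. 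The paper instead shows (Lemma \ref{lem: source}, via Bass--Serre theory applied to the same amalgam) that the kernel of the retraction $G_\Gamma\to G_{V-\{v\}}$ killing a source is \emph{free}; induction then gives poly-freeness, hence local indicability, hence left-orderability by Burns--Hale, with no orderability criterion for amalgams required. Your treatment of (3) is correct, and your sketch for (1) is plausible (the paper simply cites Foniqi for (1)), though the general identity $(v_0\cdots v_n)^2=1$ is asserted from small cases rather than proved.
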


Our main tool will be the normal form theorem for TRAAGs proved by I. Foniqi \cite{Foniqi-thesis,Foniqi-paper}.
We note that using the normal form theorem, Foniqi proved item (1) of the theorem. 
Also it follows from the normal form theorem (see Corollary \ref{cor: parabolic}) that if $U\subseteq V$ then $\langle U \rangle\sim G_\Delta$ where $G_\Delta$ is the TRAAGs defined on the full subgraph of $\Gamma$ spanned by $U$.
In particular, if $D$ is non-empty, then  the Klein bottle group $K=\langle a,b \mid aba^{-1}=b^{-1}\rangle$ embeds into the TRAAG, and hence it can not be bi-ordered as $K$ is not bi-orderable and bi-orderability passes to subgroups.
Also right-angled Artin groups are residually torsion-free nilpotent \cite{Droms} and hence bi-orderable. Therefore, item (3) of the theorem follows from well-known results.

The content of this note is proving item (2) of the theorem. 
We will show that in the case where $\Gamma$ is finite, we have that $G_\Gamma$ is poly-free if  $\Gamma$ does not contain an oriented cycle.
Recall that poly-free groups are locally indicable, that is, every finitely generated non-trivial subgroup has an infinite cyclic quotient. 
In particular, if $\Gamma$ is arbitrary, we will deduce that $G_\Gamma$ is locally indicable as any subgroup generated by a finite subset of $V$ is locally indicable. 
By the Burns-Hale theorem \cite{Burns-Hale} locally indicable groups are left-orderable.

From this discussion, it should be clear that Theorem \ref{thm: main} now follows from Proposition \ref{prop: cycle implies non-orderability} and Proposition \ref{prop: poly-free}. 
The first one states if the defining graph has an oriented cycle then the associated TRAAGs is not left-orderable and the second one states  that finite  mixed graphs without oriented cycles define poly-free TRAAGs.

We will recall the normal form theorem for TRAAGs and some other background material in Section \ref{section: background} and give the proofs of the mentioned propositions in Section \ref{section: proofs}. 

\section{Background}\label{section: background}
\subsection{Normal form on TRAAGs}
Given a (mixed) graph $\Gamma = (V,E, D, o,t)$ the {\it link} of $v\in V$ is the set of vertices $u\in V$ such that $\{u,v\}\in E$. 
It is denoted by $\lk(v)$.  
The {\it star} of $v$ is $\lk(v)\cup \{v\}$.

Let $\Gamma = (V,E, D, o,t)$ be a mixed graph and $G_\Gamma$ be the TRAAG associated to $\Gamma$.
Let $A= \cup_{v\in V} \langle v \rangle$. 
A word $v_1^{n_1}v_2^{n_2}\dots v_m^{n_m}\in A^*$ is {\it reduced } if it is empty (i.e. $m=0$), or if every $n_i\neq 0$ and for every $i<j$ such that $v_i=v_j$, $\{v_{i+1},\dots, v_{j-1}\}\not\subseteq \st(v_i)$.

\begin{thm}\cite[Theorem 3.4.13]{Foniqi-thesis}
Let $\Gamma$ be a mixed graph and $G_\Gamma$ be the associated group.
Every element $g\in G_\Gamma$ can be represented by a reduced word over $A= \bigcup_{v\in V}\langle v \rangle$, and a reduced word over $A$ represents the identity element of $G_\Gamma$ if and only if it is empty.
\end{thm}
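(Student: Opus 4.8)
The plan is to obtain this from the van der Waerden trick, that is, from an explicit action of $G_\Gamma$ on the set $\cR$ of reduced words over $A$. First note that every element of $G_\Gamma$ is represented by \emph{some} reduced word: starting from an arbitrary word over $A$, delete zero-exponent syllables, and whenever a pair of positions $i<j$ with $v_i=v_j$ and $\{v_{i+1},\dots,v_{j-1}\}\subseteq\st(v_i)$ occurs, use the defining relations to slide the syllable $v_j^{n_j}$ leftwards past $v_{i+1}^{n_{i+1}},\dots,v_{j-1}^{n_{j-1}}$ and then merge it with $v_i^{n_i}$. Each elementary slide rewrites a factor $u^aw^b$ with $\{u,w\}\in E$ as $w^{\pm b}u^{\pm a}$, the two signs being determined only by the type and orientation of the edge and the parities of $a$ and $b$; this is immediate from the defining relation between $u$ and $w$. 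Since this procedure strictly decreases the number of syllables it terminates at a reduced word, so the remaining content of the theorem is the converse: \emph{a reduced word representing the identity of $G_\Gamma$ is empty.}

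To prove this, I would define, for every syllable $v^n$, a map $\lambda_{v^n}\colon\cR\to\cR$ (and set $\lambda_{v^0}=\mathrm{id}$) as follows. Given a reduced word $\mathbf u=u_1^{m_1}\cdots u_k^{m_k}$, there is at most one index $j$ with $u_j=v$ and $u_1,\dots,u_{j-1}\in\lk(v)$ (if two indices $j<j'$ both worked, the condition for $j'$ would force $u_j=v\in\lk(v)$). If there is no such $j$, set $\lambda_{v^n}(\mathbf u)=v^n\mathbf u$, which one checks is again reduced. If there is such a $j$, slide $v^n$ rightwards past $u_1^{m_1}\cdots u_{j-1}^{m_{j-1}}$ (these vertices all lie in $\lk(v)$) and merge it with $u_j^{m_j}$, obtaining a word $u_1^{\pm m_1}\cdots u_{j-1}^{\pm m_{j-1}}\,v^{n'}\,u_{j+1}^{m_{j+1}}\cdots u_k^{m_k}$ for a suitable $n'\in\bZ$ (deleting the central factor when $n'=0$); one checks this word is reduced, and declares it to be $\lambda_{v^n}(\mathbf u)$. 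A bookkeeping-heavy but routine verification now shows that $\lambda_{v^m}\circ\lambda_{v^n}=\lambda_{v^{m+n}}$, so that each $\lambda_{v^n}$ is a bijection of $\cR$ with $\lambda_{v^n}=\lambda_{v^1}^{\,n}$, and --- the essential point --- that every defining relator of $G_\Gamma$, read with each $v$ replaced by $\lambda_v=\lambda_{v^1}$, acts as the identity on $\cR$: for a commuting edge this is $\lambda_u\lambda_w=\lambda_w\lambda_u$, and for a Klein edge $[a,b\rangle$ it is $\lambda_b\lambda_a\lambda_b^{-1}=\lambda_a^{-1}$. Hence $v\mapsto\lambda_v$ induces a homomorphism $\Phi\colon G_\Gamma\to\mathrm{Sym}(\cR)$.

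Finally, if $w=v_1^{n_1}\cdots v_m^{n_m}$ is reduced, then $\lambda_{v_1^{n_1}}$ applied to the reduced word $v_2^{n_2}\cdots v_m^{n_m}$ simply prepends $v_1^{n_1}$, since otherwise $w$ itself would contain a reducible pair involving its first syllable. By induction on $m$ it follows that $\Phi(g)(\varnothing)=\bigl(\lambda_{v_1^{n_1}}\circ\cdots\circ\lambda_{v_m^{n_m}}\bigr)(\varnothing)=w$, where $g$ denotes the element represented by $w$ and $\varnothing$ the empty word. If $g=1$ this forces $w=\Phi(1)(\varnothing)=\varnothing$, as claimed. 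The step I expect to be the real obstacle is the verification that the Klein relators act trivially on $\cR$: one must compare $\lambda_b\lambda_a\lambda_b^{-1}$ with $\lambda_a^{-1}$ on an arbitrary reduced word, which breaks into several cases according to whether and where $a$- and $b$-syllables occur and whether the intervening prefixes lie in $\st(a)$ or $\st(b)$, all while keeping track of the sign changes produced by the slides. (An alternative is to package the reductions as a length-decreasing rewriting system on syllable words and establish confluence via Newman's lemma; the case analysis needed there is of comparable size.)
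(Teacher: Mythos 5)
The paper contains no proof of this statement: it is imported from Foniqi's thesis \cite{Foniqi-thesis} and used as a black box, so there is no in-paper argument to compare yours against. That said, your sketch is the standard van der Waerden trick for normal-form theorems of this kind, and its skeleton is sound; the steps you defer as ``routine'' are genuinely where all the work sits, but each of them does go through. The uniqueness of the index $j$ is exactly as you say, since $v\notin\lk(v)$. The prepended word $v^n\mathbf u$ is reduced because a violating pair anchored at the new first syllable would, upon taking the minimal $i$ with $u_i=v$, produce an index $j$ of the kind you excluded (here one uses that $\st(v)=\lk(v)\cup\{v\}$, so the $\st$-condition in the definition of reducedness reduces to your $\lk$-condition). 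In the merge case the output stays reduced even when the central syllable is deleted: every $u_i$ in the prefix lies in $\lk(v)$, hence $v\in\st(u_i)$, so removing the $v$-syllable cannot convert a set $\{u_{i+1},\dots,u_{j'-1}\}\not\subseteq\st(u_i)$ into one that is contained. One simplification worth recording before you embark on the Klein-relator case analysis: for an oriented edge $[u,v\rangle$ the relation gives $v^k u^m=u^{(-1)^k m}v^k$ and $u^m v^k=v^k u^{(-1)^k m}$, so in every slide only the \emph{origin} vertex's exponent can change sign, and only according to the parity of the \emph{target}'s exponent; this cuts the bookkeeping roughly in half. Finally, your orientation convention $\lambda_b\lambda_a\lambda_b^{-1}=\lambda_a^{-1}$ for $[a,b\rangle$ matches the paper's (the word $abab^{-1}$ encodes $bab^{-1}=a^{-1}$), which is worth double-checking since an orientation slip here would silently break the relator verification.
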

 

We have the following corollary
\begin{cor}\label{cor: parabolic}
Let $U\subseteq V$ and $\Delta$ be the full subgraph of $\Gamma$ spanned by $U$.
Then $\langle U\rangle\cong G_\Delta$.
\end{cor}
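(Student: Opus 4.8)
The plan is to use the normal form theorem (Theorem 3.4.13 quoted above) to produce a retraction-type argument. There is an obvious surjective homomorphism: the inclusion of generating sets $U \hookrightarrow V$ sends each relator of $G_\Delta$ to a relator of $G_\Gamma$ (since $\Delta$ is the \emph{full} subgraph on $U$, every edge of $\Gamma$ with both endpoints in $U$ is an edge of $\Delta$, and conversely), so there is a well-defined homomorphism $\varphi \colon G_\Delta \to G_\Gamma$ with image $\langle U \rangle$. The content of the corollary is that $\varphi$ is injective.

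First I would record the set-up: let $A_\Delta = \bigcup_{u \in U} \langle u \rangle$ and $A_\Gamma = \bigcup_{v \in V} \langle v \rangle$, so $A_\Delta \subseteq A_\Gamma$ and $A_\Delta^* \subseteq A_\Gamma^*$. The key observation is that the notion of ``reduced word'' defined before Theorem 3.4.13 is \emph{intrinsic to the subgraph}: a word $u_1^{n_1}\cdots u_m^{n_m}$ with all $u_i \in U$ is reduced as a word over $A_\Delta$ if and only if it is reduced as a word over $A_\Gamma$. This is because the reducedness condition only refers, for each pair $i<j$ with $u_i = u_j$, to whether $\{u_{i+1},\dots,u_{j-1}\} \subseteq \mathrm{st}(u_i)$; and since all these vertices lie in $U$, and $\Delta$ is the full subgraph on $U$, we have $\{u_{i+1},\dots,u_{j-1}\} \subseteq \mathrm{st}_\Delta(u_i)$ if and only if $\{u_{i+1},\dots,u_{j-1}\} \subseteq \mathrm{st}_\Gamma(u_i)$. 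So the two reducedness conditions literally coincide on words over $A_\Delta$.

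Now I would conclude as follows. Take $g \in G_\Delta$ with $\varphi(g) = 1$ in $G_\Gamma$; we want $g = 1$ in $G_\Delta$. By Theorem 3.4.13 applied to $\Gamma$... wait, rather: by Theorem 3.4.13 applied to $G_\Delta$, represent $g$ by a reduced word $w \in A_\Delta^*$. By the observation of the previous paragraph, $w$ is also a reduced word over $A_\Gamma$, and it represents $\varphi(g) = 1$ in $G_\Gamma$. By the ``only if'' direction of Theorem 3.4.13 applied to $\Gamma$, $w$ must be the empty word. Hence $g = 1$ in $G_\Delta$, so $\varphi$ is injective, and $\langle U \rangle = \mathrm{im}(\varphi) \cong G_\Delta$.

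The only genuine point requiring care — and the step I would flag as the ``main obstacle,'' though it is mild — is verifying that reducedness is subgraph-intrinsic, i.e.\ checking that $\mathrm{st}_\Delta(u) = \mathrm{st}_\Gamma(u) \cap U$ for $u \in U$, which is precisely the hypothesis that $\Delta$ is the \emph{full} (induced) subgraph spanned by $U$; the orientation data $D$ and the functions $o,t$ play no role here since the reducedness condition does not see them. Everything else is a direct invocation of the normal form theorem, so the proof is short.
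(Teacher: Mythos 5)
Your proposal is correct and follows essentially the same route as the paper: both define the natural map $G_\Delta \to G_\Gamma$, observe that reduced words over $U$ stay reduced in $\Gamma$ because $\Delta$ is the full subgraph (so stars restrict correctly), and invoke the normal form theorem to get injectivity. Your write-up simply spells out the star-restriction point that the paper leaves implicit.
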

\begin{proof}
There is a natural map $f\colon G_\Delta \to G_\Gamma$ given by $u\mapsto u$. Clearly, the image of $f$ is $\langle U\rangle$. Now, any reduced word in $G_\Delta$ is sent to a reduced word over $G_\Gamma$ by $f$ and hence $f$ is injective.
\end{proof}

\begin{nt}
Given $\Gamma=(V,E,D,o,t)$ a mixed graph and $U\subseteq V$ we will denote by $G_U$ the TRAAG based on the mixed full subgraph of $\Gamma$ spanned by $U$. 
Sometimes we will further abuse the notation and identify $G_U$ with $\langle U\rangle$. 
\end{nt}

\subsection{Left-orders on the fundamental group of a Klein bottle}
It is well-known fact that there are only 4 left-orders on $K=\langle a,b \mid aba^{-1}=b^{-1}\rangle$ and in all these orders the subgroup $\langle a \rangle$ dominates the left-order. We briefly recall the argument and make this statement precise. 

Recall that given a left-order $\prec$ on a group $G$, the set $P_\prec =\{g\in G \mid 1\prec g\}$ is a subsemigroup satisfying that $G$ is the disjoint union of $P,P^{-1}$ and $\{1\}$.  Conversely, any semigroup $P$ of $G$ that satisfies that  $G$ is the disjoint union of $P,P^{-1}$ and $\{1\}$ defines a left-order by $x\prec_P y\colon \Leftrightarrow x^{-1}y\in P$. 
Such subsemigroups are called {\it positive cones}.

It is clear that $\mathbb{Z}$ only has two possible positive cones (namely $\mathbb{Z}_{>0}$ and  $\mathbb{Z}_{<0}$) and hence it has only two left-orders.
In the case $K$ with the above presentation, for $\varepsilon, \mu \in \{ \pm 1\}$ we set $P_{\varepsilon, \mu} = \{ a^{\varepsilon n} b^{\mu m} \mid n \ge 1 \text{ and } m \in \mathbb Z, \text{ or } n=0 \text{ and } m \ge 1\}$. 
Then we have the following
\begin{lem}
The subsemigroups 
$P_{1,1}$, $P_{1,-1}$, $P_{-1,1}$, $P_{-1,-1}$
are positive cones.
\end{lem}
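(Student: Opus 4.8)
The plan is to reduce everything to the standard description of $K$ as the semidirect product $\langle b\rangle\rtimes\langle a\rangle\cong\bZ\rtimes\bZ$, in which $a$ acts on $\langle b\rangle\cong\bZ$ by inversion. Concretely: every element of $K$ is written uniquely as $a^nb^m$ with $n,m\in\bZ$ (this is classical, and also follows from the normal form theorem above applied to the two-vertex mixed graph with a single oriented edge $[a,b\rangle$, pushing all $b$'s to the right via the defining relation and using that $b$ has infinite order), and the group law is
$$a^{n_1}b^{m_1}\cdot a^{n_2}b^{m_2}=a^{n_1+n_2}\,b^{(-1)^{n_2}m_1+m_2},$$
because $b^{m}a=ab^{-m}$. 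I would record this first, together with the companion formula $(a^nb^m)^{-1}=a^{-n}b^{(-1)^{n+1}m}$.

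Next I would exploit symmetry. The assignments $a\mapsto a^{-1},\,b\mapsto b$ and $a\mapsto a,\,b\mapsto b^{-1}$ each carry the defining relator to (the inverse of) itself, hence extend to automorphisms of $K$; composing them in all ways yields four automorphisms that permute the sets $P_{\varepsilon,\mu}$ transitively. So it suffices to prove that $P\coloneq P_{1,1}=\{a^nb^m:n\ge 1\}\cup\{b^m:m\ge 1\}$ is a positive cone.

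For the partition property, note that in the normal form $a^nb^m$ membership in $P$ is exactly the condition ``$n\ge 1$, or $n=0$ and $m\ge1$''; applying the inverse formula, membership in $P^{-1}$ is exactly ``$n\le -1$, or $n=0$ and $m\le -1$''. By uniqueness of the normal form these two conditions, together with ``$n=m=0$'', are pairwise incompatible and jointly exhaustive, so $K=P\sqcup P^{-1}\sqcup\{1\}$. For the semigroup property, take $a^{n_1}b^{m_1},a^{n_2}b^{m_2}\in P$ and observe that every element of $P$ has non-negative $a$-exponent. If $n_1\ge1$ or $n_2\ge1$ then $n_1+n_2\ge1$, so the product $a^{n_1+n_2}b^{(-1)^{n_2}m_1+m_2}$ lies in $P$; and if $n_1=n_2=0$ then $m_1,m_2\ge1$, so the product is $b^{m_1+m_2}$ with $m_1+m_2\ge1$, again in $P$. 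This exhausts the cases, and the four $P_{\varepsilon,\mu}$ follow by the symmetry above.

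The argument is elementary throughout, so there is no real obstacle beyond bookkeeping. The one spot that needs a moment's care is the sign twist $(-1)^{n_2}$ in the group law (and the corresponding twist in the inverse formula), but since in every case that arises the relevant $a$-exponents are non-negative, the twist never actually interferes — which is exactly why the ``$a$-coordinate first, then $b$'' lexicographic-style recipe defining $P$ produces a cone.
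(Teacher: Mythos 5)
Your argument is correct and is essentially the paper's own (sketched) proof carried out in full: reduce to the normal form $a^nb^m$ via $b^ma=ab^{-m}$, verify the semigroup and partition properties for $P_{1,1}$ by inspecting exponents, and obtain the other three cones by symmetry. The explicit use of the automorphisms $a\mapsto a^{-1}$ and $b\mapsto b^{-1}$ to permute the $P_{\varepsilon,\mu}$ is exactly the kind of detail the paper leaves to the reader, and all your formulas (including the sign twist $(-1)^{n_2}$) check out.
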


The proof of the lemma is elementary and well-known. 
We exemplify the situation briefly and the reader might fill the details.
For example, Using the relations $b^{-1}a =ab $, $ba=ab^{-1}$ one sees that every element can be written as $a^nb^m$ and that $P = P_{1,1} = \{ a^n b^m \mid n \ge 1 \text{ and } m \in \mathbb Z, \text{ or } n=0 \text{ and } m \ge 1\}$ is a subsemigroup.
It is now easy to see that $P, P^{-1}, \{1\}$ are disjoint and the union is $K$.

Notice that the previous lemma implies that there are only 4 possible left-orders and they are determined by which of $a,a^{-1}$ and $b,b^{-1}$ is positive.

From the exact sequence $1\to \langle b \rangle \to K\stackrel{f}{\to} \langle a \rangle \to 1$ we see how to construct 4 left-orders on $K$ from each possible left-order on $\langle a \rangle$ and $\langle b \rangle$.
Fixing the left-orders $\prec_{a}$ on $\langle a \rangle$ and $\prec_b$ on $\langle b\rangle$ we set $g\prec h$ if $f(g)\prec_a f(h)$ or $f(g)=f(h)$ and $1\prec_b g^{-1}h$.

This shows that 
\begin{lem}\label{lem: convex}
 For any left-order $\prec$ on $K=\langle a,b \mid aba^{-1}b\rangle $ one has that $\langle b \rangle \prec \max_\prec\{a,a^{-1}\}$.
\end{lem}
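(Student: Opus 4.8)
The plan is to piggyback on the classification of positive cones already recorded above. By the preceding lemma the four subsemigroups $P_{\varepsilon,\mu}$, $\varepsilon,\mu\in\{\pm1\}$, are positive cones, and since a left-order on $K$ is determined by which of $a,a^{-1}$ and of $b,b^{-1}$ is positive, these are \emph{all} the positive cones of $K$. So I would begin by fixing a left-order $\prec$, letting $P=P_\prec$ be its positive cone, and choosing $\varepsilon,\mu\in\{\pm1\}$ with $P=P_{\varepsilon,\mu}$. Since $a^\varepsilon\in P$ and $a^{-\varepsilon}=(a^\varepsilon)^{-1}\in P^{-1}$, we get $a^{-\varepsilon}\prec 1\prec a^\varepsilon$, so $\max_\prec\{a,a^{-1}\}=a^\varepsilon$, and it only remains to show $b^m\prec a^\varepsilon$ for every $m\in\bZ$, that is, $b^{-m}a^\varepsilon\in P$.

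The second step is a one-line computation in $K$. From $aba^{-1}=b^{-1}$ one gets $b^{k}a=a b^{-k}$ for all $k$ by induction; writing $ba^{-1}=a^{-1}(aba^{-1})=a^{-1}b^{-1}$ one gets likewise $b^{k}a^{-1}=a^{-1}b^{-k}$. Hence $b^{k}a^{\varepsilon}=a^{\varepsilon}b^{-k}$ for both values of $\varepsilon$, and applying this with $k=-m$ gives $b^{-m}a^{\varepsilon}=a^{\varepsilon}b^{m}$. Now $a^{\varepsilon}b^{m}=a^{\varepsilon\cdot 1}b^{\mu\cdot(\mu m)}$ has exponent $1\ge 1$ on $a$, so by the definition of $P_{\varepsilon,\mu}$ it lies in $P_{\varepsilon,\mu}=P$. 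Therefore $b^m\prec a^\varepsilon=\max_\prec\{a,a^{-1}\}$, and since $m\in\bZ$ was arbitrary this is exactly the assertion $\langle b\rangle\prec\max_\prec\{a,a^{-1}\}$.

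There is essentially no hard part here: the only point needing a little care is the first step, namely knowing that every positive cone of $K$ is one of the four $P_{\varepsilon,\mu}$. This is the content of the discussion preceding the statement — the four sets are positive cones, and a left-order of $K$ is pinned down by the signs of the two generators, there being no further freedom since $\langle a\rangle\cong\bZ$ has only two orders and the extension $1\to\langle b\rangle\to K\to\langle a\rangle\to 1$ then determines the rest. If one preferred to avoid invoking the classification, one could argue directly: write an arbitrary element as $a^nb^m$, observe that fixing $\varepsilon$ as the sign of $a$ forces $n\ge 1$ whenever $n\ne0$ (and $m\ge 1$ when $n=0$) for it to be positive, and conclude as above; but routing through the already-stated lemma is cleaner.
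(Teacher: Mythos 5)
Your proof is correct and follows essentially the same route as the paper: both reduce the statement to the classification of the four positive cones $P_{\varepsilon,\mu}$ from the preceding discussion and then verify, via the commutation $b^{-m}a^{\varepsilon}=a^{\varepsilon}b^{m}$, that $b^{m}\prec a^{\varepsilon}=\max_\prec\{a,a^{-1}\}$. The one load-bearing input --- that these four are \emph{all} the positive cones of $K$ --- is exactly the assertion the paper itself makes (and defers to the elementary/Tararin-type argument for), so your appeal to it matches the paper's own level of detail.
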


\begin{rem}\label{rem: Tararin}
If fact the previous lemma is much more general and for any left-order $\prec$ on $BS(1,-m)=\langle a,b \mid aba^{-1} =b^{-m}\rangle$ $(m>0)$ one has that $\langle b \rangle \prec \max_\prec\{a,a^{-1}\}$. 
Those are examples of Tararin \cite{Tararin} groups, which are groups with finitely many left-orders and  all the left-orders are understood.
This fact follows easily from the theory of Tararin groups but it will be not needed  for our purposes. See alternatively \cite[Theorem 2.2.13]{GOD}. 
\end{rem}

\section{Proofs}\label{section: proofs}

\begin{prop}\label{prop: cycle implies non-orderability}
Let $\Gamma$ be a mixed graph and $G_\Gamma$ be the associated TRAAG. 
If $\Gamma$ contains an oriented cycle, then $G_\Gamma$ is not left-orderable.
\end{prop}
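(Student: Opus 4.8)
The plan is to extract from the hypothesized oriented cycle a concrete finite subgroup of $G_\Gamma$ that is known not to be left-orderable, and then invoke the fact that left-orderability passes to subgroups. Suppose $e_0,\dots,e_n\in D$ is an oriented cycle, with $o(e_{i+1})=t(e_i)$ and $o(e_0)=t(e_n)$. Write $v_i=o(e_i)$, so that $t(e_i)=v_{i+1}$ (indices mod $n+1$) and the defining relation associated to $e_i$ is $v_i\,v_{i+1}\,v_i^{-1}=v_{i+1}^{-1}$, i.e. conjugation by $v_i$ inverts $v_{i+1}$. By Corollary \ref{cor: parabolic} the subgroup $H=\langle v_0,\dots,v_n\rangle$ is itself a TRAAG $G_\Delta$ on the full subgraph $\Delta$ spanned by these vertices, so it suffices to prove $G_\Delta$ is not left-orderable; in particular we may assume $\Gamma=\Delta$ is finite and that every $v_i$ appears in the cycle.

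The key observation is that going once around the cycle shows each $v_i$ is conjugate, inside $H$, to its own inverse: conjugating by $v_{i-1}$ inverts $v_i$, and more globally the cyclic composition of these inversions forces a constraint. Concretely I would argue: if $\prec$ is a left-order on $H$, consider the generators $v_0,\dots,v_n$ and, using Lemma \ref{lem: convex} applied to each Klein-bottle relation $v_i v_{i+1} v_i^{-1}=v_{i+1}^{-1}$, deduce that $\langle v_{i+1}\rangle$ is $\prec$-dominated by $\max_\prec\{v_i,v_i^{-1}\}$. Chaining these dominations around the oriented cycle yields $\langle v_0\rangle \prec \max\{v_n,v_n^{-1}\} \preceq \dots$, ultimately producing $\max_\prec\{v_0,v_0^{-1}\}\prec\max_\prec\{v_0,v_0^{-1}\}$, a contradiction. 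The essential point is that "$\langle b\rangle$ is dominated by $a$" is a strict, transitive phenomenon that cannot close up into a cycle.

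To make the chaining rigorous I would set, for each $i$, $t_i=\max_\prec\{v_i,v_i^{-1}\}$ (the $\prec$-larger of the two, which is $\succ 1$ since $v_i$ has infinite order in $H$), and prove the inequality $t_{i+1}\prec t_i$ for every $i$ by combining the relation $v_i v_{i+1} v_i^{-1}=v_{i+1}^{-1}$ with the observation that in any left-order, if $g s g^{-1}=s^{-1}$ then $s$ and hence $s^{-1}$ lie in the $\langle s\rangle$-part which Lemma \ref{lem: convex} (with $a=v_i$, $b=v_{i+1}$, noting $\langle v_i,v_{i+1}\rangle$ is a quotient of $K$ but Lemma \ref{lem: convex}'s conclusion is about the order on the subgroup and pulls back) tells us is below $t_i$. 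Then $t_0\prec t_0$ after $n+1$ steps.

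The main obstacle is the last technical point: Lemma \ref{lem: convex} as stated is about $K=\langle a,b\mid aba^{-1}b\rangle$ itself, whereas here $\langle v_i,v_{i+1}\rangle$ is merely a \emph{quotient} of $K$ (other relations of $\Gamma$ may collapse it), so I cannot literally apply the lemma to a subgroup. The fix is to prove the needed fact directly at the level of left-orders: if $\prec$ is any left-order on a group $G$ and $g,s\in G$ satisfy $gsg^{-1}=s^{-1}$, then every element of $\langle s\rangle$ is $\prec$-smaller than $\max_\prec\{g,g^{-1}\}$ — equivalently $\langle s\rangle$ lies in the $\prec$-convex subgroup below $g$. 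This is exactly Remark \ref{rem: Tararin} in spirit and is a short self-contained argument (if $s^k \succeq \max\{g,g^{-1}\}$ for some $k$, conjugating the inequality by $g$ flips $s^k$ to $s^{-k}$ while fixing the bound $\max\{g,g^{-1}\}$ up to the same order relation, giving a contradiction). Once this lemma is in hand, the cyclic chaining goes through verbatim and produces the contradiction, completing the proof.
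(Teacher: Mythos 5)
Your chaining argument is exactly the paper's proof: restrict the putative left-order to each pair of consecutive vertices of the cycle, use Lemma \ref{lem: convex} to get a strict domination $\max_\prec\{x_i,x_i^{-1}\}\prec\max_\prec\{x_{i+1},x_{i+1}^{-1}\}$, and observe that these strict inequalities cannot close up around a cycle. (A cosmetic remark: with the paper's convention $[o(e),t(e)\rangle = o(e)\,t(e)\,o(e)\,t(e)^{-1}$, it is $t(e)$ that inverts $o(e)$, so your dominations run in the direction opposite to the one you wrote; this is harmless precisely because the contradiction is cyclic.)

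The one place where your write-up goes astray is the step you yourself flag as ``the main obstacle.'' First, the obstacle is not there: Corollary \ref{cor: parabolic} applied to $U=\{v_i,v_{i+1}\}$ says that $\langle v_i,v_{i+1}\rangle$ is the TRAAG on the full subgraph spanned by these two vertices, which has exactly one edge, namely the oriented one; so this subgroup is isomorphic to $K$ on the nose, not merely a quotient of $K$, and Lemma \ref{lem: convex} applies verbatim to the restricted order. Second, the workaround you propose instead --- a general lemma for arbitrary left-ordered groups proved by ``conjugating the inequality by $g$'' --- does not work as sketched: a left-order is invariant under left multiplication only, so $x\prec y$ does not imply $gxg^{-1}\prec gyg^{-1}$; conjugation-invariance is exactly what separates bi-orders from left-orders, and $K$ itself (left-orderable but not bi-orderable) shows the gap is real. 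The general statement you want is nevertheless true, but only because $gsg^{-1}=s^{-1}$ with $s\neq 1$ in a torsion-free group already forces $\langle g,s\rangle\cong K$, so it reduces back to Lemma \ref{lem: convex} rather than replacing it. Drop the detour, cite Corollary \ref{cor: parabolic} for each pair, and your proof coincides with the paper's.
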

\begin{proof}
Let $e_0=[x_0,x_1\rangle, e_1=[x_1,x_2\rangle, \dots e_n=[x_n,x_0\rangle$ be an oriented cycle. 
We consider the indices of the $x_i$ mod $n+1$. 
Therefore for each $i$, we have $x_i^{-1} = x_{i+1}x_ix_{i+1}^{-1}$.
Recall $\langle x_i,x_{i+1}\rangle$ is naturally isomorphic to $K=\langle a,b \mid aba^{-1} = b^{-1}\rangle$ via $x_i\mapsto b, x_{i+1}\mapsto a$.
Suppose that $G_\Gamma$ is left-orderable with left-order $\prec$.
Then, by Lemma \ref{lem: convex}, we have that
$\max_{\prec}\{x_i, x_i^{-1}\}\prec \max_{\prec}\{x_{i+1},x_{i+1}^{-1}\}$ for all $i$. This implies that $\max_{\prec}\{x_0, x_0^{-1}\}\prec \max_{\prec}\{x_0, x_0^{-1}\}$ which is a contradiction.
\end{proof}
\begin{rem}
Following Remark \ref{rem: Tararin} one sees that the same argument shows that $$G=\langle x_0,x_1,x_2,x_3 \mid  x_0x_1x_0^{-1}=x_1^{-2}, x_1x_2x_1^{-1}= x_2^{-2}, x_2x_3x_2^{-1} = x_3^{-2}, x_3x_0x_3^{-1}= x_0^{-2}\rangle$$
is not left-orderable. 

For that it is enough to show that each $\langle x_i,x_{i+1}\rangle$ (indexes mod $4$) is isomorphic to $BS(1,-2)=\langle a,b \mid aba^{-1} = b^{-2}\rangle$.
Recall that in this group both $a$ and $b$ are non-trivial and of infinite order.
Observe that $\langle a,b,c \mid aba^{-1}=b^{-2}, bcb^{-1}=c^{-2}\rangle$ is an amalgamated free product of two $BS(1,-2)$ along an infinite cyclic subgroup. 
From the action on the Bass-Serre tree it is easy to deduce that $\{ a, c\}$ freely generates a free group.
Hence $G$ is an amalgamated free product $\langle x_0,x_1,x_2\rangle *_{\langle x_0,x_2\rangle} \langle x_2,x_3,x_0\rangle$.
It follows now that each $\langle x_i,x_{i+1}\rangle$ is isomorphic to $BS(1,-2)$ via $x_i\mapsto a, x_{i+1}\mapsto b$. 

Other examples can be constructed in this vein. This particular example contrasts  with the non-trivial fact that the Higman group
$$H=\langle x_0,x_1,x_2,x_3 \mid  x_0x_1x_0^{-1}=x_1^{2}, x_1x_2x_1^{-1}= x_2^{2}, x_2x_3x_2^{-1} = x_3^{2}, x_3x_0x_3^{-1}= x_0^{2}\rangle$$
is left-orderable.
This was proved by C. Rivas and M. Triestino \cite{RivasTriestino}.
\end{rem}

Given a mixed graph, a vertex $v$ is a {\it source}  if for all $e\in E$ such that $v\in E$ one has that either $e=[u,v]$ or $e=[v,u\rangle$.

\begin{lem}\label{lem: source}
Let $\Gamma$ be a mixed graph and $G_\Gamma$ be the associated TRAAG. 
Let $v$ be a source and and $U=V-\{v\}$.
Then the homomorphism  $\rho \colon G_\Gamma \to G_U$ given by $u\mapsto u$ for all $u\in U$ and $v\mapsto 1$ is well defined and the kernel is free. 
\end{lem}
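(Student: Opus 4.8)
The plan is to exhibit $G_\Gamma$ as an amalgamated free product and then, via Bass--Serre theory, to show that $\Ker\rho$ acts on the associated tree with trivial edge stabilisers and with every vertex stabiliser trivial or infinite cyclic, which forces $\Ker\rho$ to be free.

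I would first record the structure of the star subgroup. Because $v$ is a source, every edge of $\Gamma$ at $v$ joins $v$ to some $u\in\lk(v)$ and, read as a relation, has the form $uvu^{-1}=v$ or $uvu^{-1}=v^{-1}$; in particular $uvu^{-1}\in\langle v\rangle$ for all $u\in\lk(v)$. Hence $\langle v\rangle$ is normal in $G_{\st(v)}$, and killing $v$ identifies $G_{\st(v)}/\langle v\rangle$ with $G_{\lk(v)}$. Since the inclusion $G_{\lk(v)}\hookrightarrow G_{\st(v)}$ of Corollary \ref{cor: parabolic} splits this quotient and $\langle v\rangle\cong\bZ$ (Corollary \ref{cor: parabolic} applied to $\{v\}$), I get $G_{\st(v)}=\langle v\rangle\rtimes G_{\lk(v)}$, with $\langle v\rangle=\Ker(\rho|_{G_{\st(v)}})$.

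Next I would compare presentations to see that $G_\Gamma=G_U \ast_{G_{\lk(v)}} G_{\st(v)}$, amalgamated over the two embeddings of $G_{\lk(v)}$ given by Corollary \ref{cor: parabolic}: the defining relators of $G_\Gamma$ are those of $G_U$ together with the relations $uvu^{-1}=v^{\pm1}$ ($u\in\lk(v)$), and these are precisely the defining relators of $G_{\st(v)}$ that do not already hold in $G_{\lk(v)}$. Let $T$ be the Bass--Serre tree of this splitting, on which $G_\Gamma$ acts without inversions with vertex stabilisers the conjugates of $G_U$ and of $G_{\st(v)}$ and edge stabilisers the conjugates of $G_{\lk(v)}$. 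Writing $N=\Ker\rho$: since $\rho$ restricts to the identity on $G_U$ we have $N\cap G_U=1$, hence $N\cap G_{\lk(v)}=1$ (as $G_{\lk(v)}\le G_U$), while $N\cap G_{\st(v)}=\Ker(\rho|_{G_{\st(v)}})=\langle v\rangle\cong\bZ$; and since $N$ is normal, its intersection with any conjugate of $G_U$, $G_{\st(v)}$ or $G_{\lk(v)}$ is the corresponding conjugate. Thus $N$ acts on $T$ with trivial edge stabilisers and with every vertex stabiliser trivial or infinite cyclic, so by the structure theorem for groups acting on trees with trivial edge stabilisers $N$ is a free product of infinite cyclic groups with a free group, hence free.

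The main thing to get right — and the only non-formal step — is the amalgam decomposition $G_\Gamma=G_U\ast_{G_{\lk(v)}} G_{\st(v)}$: this needs Corollary \ref{cor: parabolic} to guarantee that the parabolics $G_U$, $G_{\st(v)}$, $G_{\lk(v)}$ embed in $G_\Gamma$ as expected, and it needs the source hypothesis to guarantee that $v$ occurs in no relation beyond the star relations, so that the amalgam forces no extra identifications. Everything after that is a routine application of Bass--Serre theory.
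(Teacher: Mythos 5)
Your proof is correct and follows essentially the same route as the paper's: the amalgam $G_\Gamma=G_{\st(v)}\ast_{G_{\lk(v)}}G_U$, the action of $N=\Ker\rho$ on the Bass--Serre tree with trivial edge stabilisers and trivial or infinite cyclic vertex stabilisers, and the structure theorem for groups acting on trees. The only difference is cosmetic: you make the semidirect product structure $G_{\st(v)}=\langle v\rangle\rtimes G_{\lk(v)}$ explicit where the paper only records $G_{\st(v)}=\langle v\rangle\cdot G_{\lk(v)}$.
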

\begin{proof}
Suppose that $v\in V$ is a source. 
Then any $u\in U\colon = V-\{u\}$ either has no relation with $v$ or $u$ conjugates $v$ to either $v$ or $v^{-1}$.
This means that the image of any relation after killing $v$ is of the form $uu^{-1}$ and hence the function $\rho$ of the statement is well-defined.

It is clear from the presentation that $G_\Gamma = G_{\st(v)}*_{G_{\lk(v)}} G_U$.
Let $T$ be the Bass-Serre tree associated to the amalgamated free product decomposition above.
The $G_\Gamma$-tree $T$ has one $G_\Gamma$-orbit of edges, whose stabilizers are conjugated to $G_{\lk(v)}$ and two $G_\Gamma$-orbits of vertices, whose stabilizers are either conjugates to $G_{\st(v)}$ or $G_{U}$. 
Let $N=\ker \rho$. 
Note that for any $g\in G_\Gamma$, $N\cap gG_U g^{-1}$ is trivial, as the restriction of $\rho$ to $G_U$ is the identity.
Therefore, $N$ acts freely on the edges of $T$ and on vertices of the orbit with $G_\Gamma$-stabilizer conjugated to $G_U$.

Using the relations $uv=vu$ or $uv=v^{-1}u$, it is easy to see that $G_{\st(v)}=\langle v\rangle \cdot G_{\lk(v)}$. 
Therefore $N\cap G_{\st(v)}=\langle v\rangle$ as $\rho$ is injective restricted to $G_{\lk(v)}$. 
As $N$ is normal, $N\cap gG_{\st(v)}g^{-1}$ is cyclic for any $g\in G_\Gamma$ .
Therefore $N$ acts on a tree with trivial edge stabilizers and infinite cyclic or trivial vertex stabilizers. 
This implies, $N$ is the fundamental group of a graph of groups with trivial edge groups and infinite cyclic or trivial vertex groups, and therefore $N$ is free.
\end{proof}

Recall $G$ is {\it poly-free} if there exists a finite sequence
$$G_0=\{1\} \unlhd G_1 \unlhd G_2 \unlhd \dots \unlhd G_n= G$$
such that $G_{i+1}/G_i$ is free.
It follows from the definition that if $N\unlhd G$ is free and $G/N$ is poly-free, then $G$ is poly-free.

\begin{prop}\label{prop: poly-free}
Let $\Gamma$ be a finite mixed graph without an oriented cycle, then the group $G_\Gamma$ is poly-free. 
\end{prop}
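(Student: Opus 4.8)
The plan is to argue by induction on the number of vertices $\abs{V}$. When $\abs{V}=1$ the group $G_\Gamma$ is infinite cyclic, hence free and in particular poly-free; this is the base case (one could equally start from the trivial group).

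For the inductive step, the first task is to exhibit a source. Regard the set $D$ of oriented edges as the set of arcs of a directed graph on the vertex set $V$, the arc of $e\in D$ going from $o(e)$ to $t(e)$. An oriented cycle of $\Gamma$ is precisely a directed cycle of this digraph. Since $\Gamma$ has no oriented cycle, the digraph is acyclic, and a finite acyclic directed graph always possesses a vertex with no incoming arc. Any such vertex $v$ is a source in the sense defined above: every edge of $\Gamma$ containing $v$ is either undirected or oriented away from $v$.

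Now let $U=V-\{v\}$ and let $\Delta$ be the full subgraph of $\Gamma$ spanned by $U$. Because $v$ is a source, Lemma \ref{lem: source} applies and tells us that the homomorphism $\rho\colon G_\Gamma\to G_U$ determined by $u\mapsto u$ for $u\in U$ and $v\mapsto 1$ is well defined with free kernel $N$. Since $\rho$ restricts to the identity on $G_U=\langle U\rangle$, it is surjective, so $G_\Gamma/N\cong G_U\cong G_\Delta$ by Corollary \ref{cor: parabolic}. The graph $\Delta$ has one vertex fewer than $\Gamma$ and still contains no oriented cycle, since any oriented cycle of $\Delta$ would also be one of $\Gamma$; hence $G_\Delta$ is poly-free by the inductive hypothesis. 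As $N$ is free, normal in $G_\Gamma$, and the quotient $G_\Gamma/N$ is poly-free, the remark preceding the statement shows that $G_\Gamma$ is poly-free. This closes the induction.

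The proof is short precisely because Lemma \ref{lem: source} already carries the group-theoretic weight (the Bass--Serre theory argument producing a free kernel), so the only genuinely new ingredient is the combinatorial observation that a finite mixed graph without an oriented cycle has a source, which is the familiar fact that a finite acyclic digraph has a vertex of in-degree zero. I therefore do not anticipate a real obstacle; the one point to keep in mind is simply that passing to the full subgraph on $U$ cannot create an oriented cycle, so that the inductive hypothesis genuinely applies.
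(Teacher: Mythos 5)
Your proof is correct and follows essentially the same route as the paper: induction on $\abs{V}$, existence of a source from the acyclicity of the finite digraph of oriented edges, and Lemma \ref{lem: source} combined with the remark on poly-free extensions to close the induction. No gaps.
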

\begin{proof}
The proof is by induction on $|V|$. 
If $|V|=1$ the group is infinite cyclic and hence poly-free.

Suppose that $|V|>1$ and there exists $v\in V$ that is a source. 
Then, by Lemma \ref{lem: source}, the normal closure $N$ of $v$ in $G_\Gamma$ is free and $G_\Gamma/K\cong \langle V-\{v\} \rangle \cong G_{U}$ where $U=V-\{v\}$.
Note that as $G_\Gamma$ does not contain an oriented cycle neither does $G_\Delta$. 
By induction, $G_U$ is poly-free and hence $G_U$ is poly-free.

So it remains to show that a source exists. 
Suppose, for the sake of finding a contradiction, that no vertex is a source.
This means that for every vertex $v$, there is $u\in V$ such that $[u,v\rangle\in D$. 
Let $v_0\in V$ and for $i>0$ choose  $v_i$  a vertex such that $[v_{i-1}, v_i\rangle \in D$. As $\Gamma$ is finite, there must $i\neq j$ such that $v_i=v_j$ and hence we found an oriented cycle. 
This is the desired contradiction.
\end{proof}

\noindent {\bf Acknowledgements}. The content of this note originated when the three authors met at the University of Seville in a course organized by María Cumplido. We thank her and the University of Seville for their hospitality.
Y. Antolín acknowledges partial support by the grant PID2021-126254NB-I00 of the Ministry of Science and Innovation of Spain.
M. A. Blufstein acknowledges support from the European Union’s Horizon 2020 research and innovation programme under the Marie Skłodowska-Curie grant agreement No 777822.


\end{document}